\theoremstyle{plain}
\newtheorem{theorem}{Theorem}[section]
\newtheorem{lemma}[theorem]{Lemma}
\newtheorem{corollary}[theorem]{Corollary}
\theoremstyle{definition}
\theoremstyle{remark}
\newtheorem{opp}[theorem]{Open problem}
\begin{document}

\title[Approximate derivations of order $n$]
{Approximate derivations of order $n$}

\author{Eszter Gselmann}
\address{Institute of Mathematics \\ University of Debrecen\\ H-4010 Debrecen\\ P.~O.~Box 12 }
\email{gselmann@science.unideb.hu}

\subjclass{Primary 39B82, 39B72}
\keywords{stability, derivation, higher order derivation, linear function}

\begin{abstract}
The aim of this paper is to prove characterization theorems for higher order derivations. 
Among others we prove that the system defining higher order derivations is stable. 
Further characterization theorems in the spirit of N.~G.~de Bruijn will also be  presented. 
\end{abstract}

\thanks{This research has been supported by the Hungarian Scientific Research Fund
(OTKA) Grant NK 81402 
and by the T\'{A}MOP 4.2.4.A/2-11-1-2012-0001 (Nemzeti Kiv\'{a}l\'{o}s\'{a}g Program --
Hazai hallgat\'{o}i, illetve kutat\'{o}i szem\'{e}lyi t\'{a}mogat\'{a}st biztos\'{i}t\'{o} rendszer kidolgoz\'{a}sa \'{e}s m\H{u}k\"{o}dtet\'{e}se
konvergencia program) project implemented
through the EU and Hungary co-financed by the European Social Fund.}

\maketitle

\section{Introduction}

Throughout this paper $\mathbb{N}$ denotes the set of the positive integers, further $\mathbb{Z}, \mathbb{Q}$, and
$\mathbb{R}$ have the usual meaning. 

The aim of this work is to prove characterization theorems on derivations as well as on linear functions. 
Therefore, firstly we have to recall some definitions and auxiliary results. 
Unless we indicate in the other way, all the notions follow the monograph Kuczma \cite{Kuc09}. 

A function $f:\mathbb{R}\to\mathbb{R}$ is called an \emph{additive} function
if,
\[
f(x+y)=f(x)+f(y)
\]
holds for all $x, y\in\mathbb{R}$. 

We say that an additive
function $f:\mathbb{R}\to\mathbb{R}$ is a \emph{derivation} if
\[
f(xy)=xf(y)+yf(x)
\]
is fulfilled for all $x, y\in\mathbb{R}$.

The additive function $f\colon\mathbb{R}\to\mathbb{R}$ is termed to be a \emph{linear function} if 
$f$ is of the form 
\[
 f(x)=f(1)x 
\qquad 
\left(x\in\mathbb{R}\right). 
\]

For any function $f\colon \mathbb{R}\to \mathbb{R}$ and $\alpha\in \mathbb{R}$ we define 
\[
 \delta_{\alpha}f(x)=f(\alpha x)-\alpha f(x) 
\qquad 
\left(x\in \mathbb{R}\right). 
\]

Clearly, if $f\colon \mathbb{R}\to \mathbb{R}$ is an additive function then
\[
 \delta_{\alpha}f(x)=0 
\qquad 
\left(\alpha, x\in \mathbb{R}\right)
\]
or 
\[
 \delta_{\alpha}f(1)=0 
\qquad 
\left(\alpha\in \mathbb{R}\right)
\]
yield that $f$ is linear. 

Let $n\in \mathbb{N}$ be fixed. 
Following the work \cite{UngRei98}of J.~Unger and L.~Reich, 
an additive function $f\colon \mathbb{R}\to \mathbb{R}$ is said to be a 
\emph{derivation of order $n$} if, 
\[
f(1)=0 
\quad 
\text{and}
\quad 
 \delta_{\alpha_{1}}\circ \cdots \circ \delta_{\alpha_{n+1}}f(x)=0
\]
is fulfilled for any $x, \alpha_{1}, \ldots, \alpha_{n+1}\in \mathbb{R}$. 

From this notion immediately follows that first order derivations are just real derivations. 
If we drop the assumption $f(1)=0$, we get that an additive function $f\colon \mathbb{R}\to \mathbb{R}$ fulfills 
\[
 \delta_{\alpha}\delta_{\beta}f(x)=0 
\qquad 
\left(\alpha, \beta, x\in \mathbb{R}\right)
\]
if and only if 
\[
 f(x)=d(x)+f(1)x, 
\]
where $d$ is a real derivation.

Furthermore, we also remark that there exists derivations of second order that are not first order derivations. 
Namely, let $d\colon \mathbb{R}\to \mathbb{R}$ be a nontrivial derivation and consider the function $d\circ d$.

In the remainder part of this section, we will summarize those results 
that will be utilized in connection with 
multiadditive and polynomial functions, respectively. 
For further details we refer to Sz\'{e}kelyhidi \cite{Sze85b, Sze85a, Sze91}. 

Let $G, H$ be abelian groups, let $h\in G$ be arbitrary and consider a function $f\colon G\to H$. 
The \emph{difference operator} $\Delta_{h}$ with the span $h$ of the function $f$ is defined by
\[
 \Delta_{h}f(x)=f(x+h)-f(x)
\qquad 
\left(x\in G\right). 
\]
The iterates $\Delta^{n}_{h}$ of $\Delta_{h}$, $n=0, 1, \ldots$ are defined by the recurrence 
\[
 \Delta^{0}_{h}f=f, \qquad 
\Delta^{n+1}_{h}f=\Delta_{h}\left(\Delta^{n}_{h}f\right)
\qquad 
\left(n=0, 1, \ldots\right)
\]
Furthermore, the superposition of several difference operators will be denoted concisely 
\[
 \Delta_{h_{1} \ldots h_{n}}f=\Delta_{h_{1}}\ldots \Delta_{h_{n}}f, 
\]
where $n\in\mathbb{N}$ and $h_{1}, \ldots, h_{n}\in G$. 

Let $n\in\mathbb{N}$ and $G, H$ be abelian groups.
A function $F\colon G^{n}\to H$ is
called \emph{$n$-additive} if, for every
$ i \in \{\, 1 \,,\, 2 \,,\, \dots \,,\, n \,\} $
and for every
$ x_{1}, \ldots, x_{n}, y_{i}\in G \,$,
\begin{multline*}
F\left(x_{1}, \ldots, x_{i-1}, x_{i} + y_{i}, x_{i+1}, \ldots, x_{n}\right)
\\
=F\left(x_{1}, \ldots, x_{i-1}, x_{i}, x_{i+1}, \ldots, x_{n}\right)
+F\left(x_{1}, \ldots, x_{i-1}, y_{i}, x_{i+1}, \ldots, x_{n}\right),
\end{multline*}
i.e., $F$ is additive in each of its variables
$x_{i}\in G$, $i=1, \ldots, n$. 
For the sake of brevity we use the notation $G^{0}=G$ and we call 
constant functions from $G$ to $H$ $0$-additive functions. 
Let $F\colon G^{n}\to H$ be an arbitrary function. 
By the \emph{diagonalization (or trace)} of $F$ we understand the
function
$f\colon G\to H$ arising from
$F$ by putting all the variables (from $G$)
equal:
\[
f(x)=F(x, \ldots, x)
\qquad
\left(x\in G\right).
\]
It can be proved by induction that for any symmetric, $n$-additive function
$F\colon G^{n}\to H$ the equality 
\[
 \Delta_{y_{1}, \ldots, y_{k}}f(x)=
\left\{
\begin{array}{lcl}
 n!F(y_{1}, \ldots, y_{n})& \text{for}& k=n \\
0 & \text{for} & k>n
\end{array}
\right.
\]
holds, whenever $x, y_{1}, \ldots, y_{n}\in G$, where $f\colon G\to H$ denotes the trace of the 
function $F$. This means that a symmetric, $n$-additive function is uniquely determined by its trace. 

The function $f\colon G\to H$ is called a \emph{polynomial function} of degree at most $n$, where 
$n$ is a nonnegative integer, if 
\[
 \Delta_{y_{1}, \ldots, y_{n+1}}f(x)=0
\]
is satisfied for all $x, y_{1}, \ldots, y_{n+1}\in G$. 

\begin{theorem}[Kuczma \cite{Kuc09}, Sz\'{e}kelyhidi \cite{Sze91}]\label{T1.1}
The function $p\colon G\to H$ is a polynomial at degree at most $n$ if and only if there 
exist symmetric, $k$-additive functions $F_{k}\colon G^{k}\to H$, $k=0, 1, \ldots, n$ such that 
\[
 p(x)=\sum_{k=0}^{n}f_{k}(x) 
\qquad 
\left(x\in G\right), 
\]
where $f_{k}$ denotes the trace of the function $F_{k}$, $k=0, 1, \ldots, n$. 
Furthermore, this expression for the function $p$ is unique in the sense that the 
functions $F_{k}$, which are not identically zero,  are uniquely determined. 
\end{theorem}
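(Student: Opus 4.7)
The plan is to prove both implications and uniqueness, with the forward (``polynomial implies sum of traces'') direction done by induction on $n$, and the reverse direction handled by direct computation using the displayed identity for $\Delta_{y_1,\ldots,y_k}f$ just before the theorem.

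For the easy direction, assume $p=\sum_{k=0}^{n}f_{k}$. Since the difference operator is linear and, by the formula recalled just before the statement, $\Delta_{y_{1},\ldots,y_{n+1}}f_{k}=0$ for each $k\le n$ (we apply more differences than the arity of $F_{k}$), we immediately get $\Delta_{y_{1},\ldots,y_{n+1}}p=0$, so $p$ has degree at most $n$.

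For the harder direction, I would induct on $n$. The base case $n=0$ just says $p$ is constant. For the inductive step, the central idea is: if $p$ has degree at most $n$, then the function $(y_{1},\ldots,y_{n})\mapsto \Delta_{y_{1},\ldots,y_{n}}p(x)$ is independent of $x$. Indeed, applying one more difference $\Delta_{h}$ to it gives $\Delta_{y_{1},\ldots,y_{n},h}p(x)$, which vanishes by assumption; so it is constant in $x$. I would therefore define
\[
F_{n}(y_{1},\ldots,y_{n}):=\frac{1}{n!}\Delta_{y_{1},\ldots,y_{n}}p(0).
\]
Symmetry of $F_{n}$ is immediate because the single-variable difference operators $\Delta_{h}$ commute. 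The key step is $n$-additivity of $F_{n}$. Using the identity $\Delta_{a+b}g(x)=\Delta_{a}g(x)+\Delta_{b}g(x+a)$ applied to $g=\Delta_{y_{1},\ldots,y_{n-1}}p$, combined with the crucial observation that $\Delta_{b}g(x+a)$ does not depend on the base point (as just noted, one more difference after $g$ is independent of $x$), I obtain additivity of $F_{n}$ in its last argument; symmetry then yields additivity in every argument. Writing $f_{n}$ for the trace of $F_{n}$, the identity recalled before the theorem gives $\Delta_{y_{1},\ldots,y_{n}}f_{n}(x)=n!F_{n}(y_{1},\ldots,y_{n})$, so $\Delta_{y_{1},\ldots,y_{n}}(p-f_{n})\equiv 0$, meaning $p-f_{n}$ is a polynomial of degree at most $n-1$. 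The induction hypothesis then supplies the remaining $F_{0},\ldots,F_{n-1}$.

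For uniqueness, suppose $\sum_{k=0}^{n}f_{k}\equiv 0$ with each $F_{k}$ symmetric and $k$-additive. Applying $\Delta_{y_{1},\ldots,y_{n}}$ kills every $f_{k}$ with $k<n$ and leaves $n!F_{n}(y_{1},\ldots,y_{n})=0$, forcing $F_{n}\equiv 0$. Peeling off $F_{n}$, $F_{n-1}$, $\ldots$ successively by the same argument with $\Delta_{y_{1},\ldots,y_{n-1}},\Delta_{y_{1},\ldots,y_{n-2}},\ldots$ yields $F_{k}\equiv 0$ for all $k$.

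The main obstacle is the additivity of $F_{n}$: showing that the $n$-fold difference $\Delta_{y_{1},\ldots,y_{n}}p(x)$ is independent of $x$ (so that it genuinely defines a function of the $y_{i}$'s alone) and then exploiting this to convert the non-linearity in $\Delta_{a+b}$ into plain additivity. Everything else is either a direct application of the displayed trace formula or a straightforward linearity/commutativity argument.
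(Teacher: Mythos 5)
The paper does not prove this statement at all: it is quoted as a known structure theorem from Kuczma and Sz\'ekelyhidi, so there is no internal proof to compare against. Your argument is, in substance, the standard proof from those sources and it is correct: the observation that an $n$-fold difference of a polynomial of degree at most $n$ is independent of the base point is exactly the right lemma, it gives both the well-definedness of $F_{n}$ and (via $\Delta_{a+b}g(x)=\Delta_{a}g(x)+\Delta_{b}g(x+a)$) its additivity in each variable, and the descent $p\mapsto p-f_{n}$ together with the trace formula $\Delta_{y_{1},\ldots,y_{n}}f_{n}=n!F_{n}$ closes the induction; the uniqueness argument by successively applying $\Delta_{y_{1},\ldots,y_{k}}$ is also the standard one. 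The single caveat is your division by $n!$ in the definition $F_{n}=\tfrac{1}{n!}\Delta_{y_{1},\ldots,y_{n}}p(0)$ and the step $n!F_{n}=0\Rightarrow F_{n}=0$ in the uniqueness part: these require $H$ to be uniquely divisible (torsion-free and divisible), a hypothesis the paper's loose statement for arbitrary abelian groups omits but which is present in the cited sources and is automatic in the only case the paper uses, $H=\mathbb{R}$. You should state that hypothesis explicitly; with it, your proof is complete.
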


Henceforth we will say that the
function in question is \emph{locally regular} on its domain, if at least 
one of the following statements are fulfilled. 
\begin{enumerate}[(i)]
 \item bounded on a measurable set of positive measure;
\item continuous at a point;
\item there exists a set of positive Lebesgue measure so that the restriction of the function in question
is measurable in the sense of Lebesgue. 
\end{enumerate}

\begin{lemma}
Let $n\in \mathbb{N}$ be fixed and $p\colon \mathbb{R}\to \mathbb{R}$ be a polynomial of 
degree at most $n$. If $p$ is locally regular then $p$ is a real polynomial, i.e., 
\[
 p(x)=a_{n}x^{n}+\cdots +a_{1}x+a_{0}
\qquad 
\left(x\in \mathbb{R}\right)
\]
with certain real constants $a_{0}, a_{1}, \ldots, a_{n}$. 
\end{lemma}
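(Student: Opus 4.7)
By Theorem~\ref{T1.1}, we may write uniquely $p(x)=\sum_{k=0}^{n}f_{k}(x)$, where each $f_{k}$ is the diagonal of a symmetric $k$-additive function $F_{k}\colon \mathbb{R}^{k}\to \mathbb{R}$. The plan is to show, by induction on $n$, that each $f_{k}$ has the form $c_{k}x^{k}$ for a constant $c_{k}\in\mathbb{R}$; summation then gives the claim. The base case $n=0$ is trivial, since then $\Delta_{h}p\equiv 0$ forces $p$ to be constant. For the inductive step, I would isolate the top-degree term $f_{n}$, show that $f_n(x)=c_{n}x^{n}$, and then apply the inductive hypothesis to $p-c_{n}x^{n}$, which is a polynomial of degree at most $n-1$ and remains locally regular since $c_{n}x^{n}$ is continuous.

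The crucial identity, obtained by applying the formula recalled just before Theorem~\ref{T1.1} termwise to $p=\sum f_{k}$, is
\[
n!\,F_{n}(h_{1},\ldots,h_{n})=\Delta_{h_{1},\ldots,h_{n}}p(x)=\sum_{I\subseteq\{1,\ldots,n\}}(-1)^{n-|I|}p\Bigl(x+\sum_{i\in I}h_{i}\Bigr),
\]
valid for every $x,h_{1},\ldots,h_{n}\in\mathbb{R}$; the contributions from $f_{k}$ with $k<n$ are annihilated because $n$ exceeds the order of their trace. This expresses $F_{n}$ as a finite signed sum of translates of $p$, and I would use it to transfer local regularity from $p$ to $F_{n}$ on $\mathbb{R}^{n}$. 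In the continuity case, choosing $x$ so that $x_{0}$ is the common value $x+\sum_{i\in I}h_{i}$ at $(h_{1},\ldots,h_{n})=0$ yields continuity of $F_{n}$ at $0\in\mathbb{R}^{n}$; in the bounded and measurable cases, a Steinhaus-type argument produces a subset of $\mathbb{R}^{n}$ of positive $n$-dimensional Lebesgue measure on which $F_{n}$ inherits the corresponding property.

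Once local regularity of $F_{n}$ on $\mathbb{R}^{n}$ is in hand, I would invoke Fubini's theorem to find a set of tuples $(h_{2},\ldots,h_{n})$ of positive $(n-1)$-dimensional measure for which the one-variable slice $h_{1}\mapsto F_{n}(h_{1},h_{2},\ldots,h_{n})$ is additive and possesses the corresponding one-dimensional local regularity. The classical theorem of Fr\'{e}chet (cf.~Kuczma~\cite{Kuc09}) then forces each such slice to be linear, so $F_{n}$ is linear in its first argument. Exploiting the symmetry of $F_{n}$ and iterating in each variable produces $F_{n}(h_{1},\ldots,h_{n})=c_{n}h_{1}\cdots h_{n}$ with $c_{n}=F_{n}(1,\ldots,1)$, hence $f_{n}(x)=c_{n}x^{n}$. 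The induction hypothesis applied to $p-c_{n}x^{n}$ then completes the proof.

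The main obstacle is the transfer of local regularity in the bounded and measurable cases: one must carefully apply a Steinhaus-type argument to verify that the set in $\mathbb{R}^{n}$ on which $F_{n}$ inherits the regularity genuinely has positive $n$-dimensional measure, and likewise that the Fubini slices used in the subsequent reduction to one dimension have positive $1$-dimensional measure for a positive-measure family of parameters. The continuity case, by contrast, is essentially formal.
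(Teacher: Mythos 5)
The paper does not actually prove this lemma: it is stated as a known auxiliary result immediately after Theorem~\ref{T1.1} and is implicitly attributed to the cited literature (Kuczma \cite{Kuc09}, Sz\'{e}kelyhidi \cite{Sze85a, Sze91}), so there is no in-paper argument to compare yours against. Your proposal reconstructs the standard proof from that literature: decompose $p=\sum_{k=0}^{n}f_{k}$ via Theorem~\ref{T1.1}, recover the top term from the identity $n!\,F_{n}(h_{1},\ldots,h_{n})=\Delta_{h_{1},\ldots,h_{n}}p(x)$, transfer the local regularity of $p$ to $F_{n}$, conclude $F_{n}(h_{1},\ldots,h_{n})=c_{n}h_{1}\cdots h_{n}$, and induct on $n$. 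The architecture is sound; the base case, the inductive bookkeeping (including the observation that $p-c_{n}x^{n}$ stays locally regular, after passing to a bounded subset of positive measure in the boundedness case), and the continuity case all close correctly: joint continuity of $F_{n}$ at the origin gives boundedness of the additive one-variable slices near $0$, hence linearity in each variable by symmetry and rational homogeneity.

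The one genuinely incomplete point is exactly the step you flag. In the bounded and measurable cases you must produce a base point $x$ and a set of positive $n$-dimensional measure of tuples $(h_{1},\ldots,h_{n})$ for which \emph{all} $2^{n}$ arguments $x+\sum_{i\in I}h_{i}$, $I\subseteq\{1,\ldots,n\}$, lie simultaneously in the given set $A$ of positive measure. This is not a one-line application of the Steinhaus theorem, which controls a single sum of two variables; the standard fix is a density-point estimate: take $a$ a density point of $A$, set $x=a$, and bound the relative measure, inside a small cube around the origin, of the complement of $\bigcap_{I\neq\emptyset}\{h:\ a+\sum_{i\in I}h_{i}\in A\}$ by the sum of $2^{n}-1$ terms each tending to $0$. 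Alternatively, one can bypass the whole construction by quoting the regularity theorems for polynomial functions directly (a polynomial function of degree at most $n$ that is measurable, or bounded on a set of positive measure, or continuous at a point, is continuous everywhere and hence an ordinary polynomial), which is presumably how the author intends the lemma to be read. As written, your proof is a correct plan whose hardest estimate is named but deferred rather than carried out.
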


\begin{corollary}
 Let $n\in \mathbb{N}$ be arbitrarily fixed and 
$F\colon \mathbb{R}\to \mathbb{R}$ be a symmetric, $n$-additive function. 
Assume further, that the trace of $F$ is locally regular, then 
there exists a constant $c\in \mathbb{R}$ such that 
\[
 F(x_{1}, \ldots, x_{n})=cx_{1}\cdots x_{n} 
\qquad 
\left(x\in \mathbb{R}\right). 
\]
\end{corollary}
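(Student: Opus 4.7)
The plan is to reduce the statement about $F$ to a statement about its trace $f$, apply the preceding Lemma to $f$, use rational homogeneity to pin down the form of $f$, and finally invoke the uniqueness of a symmetric $n$-additive function in terms of its diagonal.

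First I would observe that since $F$ is symmetric and $n$-additive, the identity
\[
 \Delta_{y_{1}, \ldots, y_{n}}f(x)=n!\, F(y_{1}, \ldots, y_{n})
\]
quoted in the excerpt implies $\Delta_{y_{1}, \ldots, y_{n+1}}f(x)=0$ for every $x,y_{1},\ldots,y_{n+1}\in\mathbb{R}$. Hence $f$ is a polynomial function of degree at most $n$, and by assumption it is locally regular. The preceding Lemma then yields real constants $a_{0}, a_{1},\ldots, a_{n}$ with
\[
 f(x)=a_{n}x^{n}+a_{n-1}x^{n-1}+\cdots +a_{1}x+a_{0}
\qquad
(x\in\mathbb{R}).
\]

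Next I would exploit the fact that $n$-additivity of $F$ forces rational $n$-homogeneity of the trace: for every $r\in\mathbb{Q}$ and $x\in\mathbb{R}$,
\[
 f(rx)=F(rx,\ldots,rx)=r^{n}F(x,\ldots,x)=r^{n}f(x).
\]
Substituting the polynomial expression for $f$ and comparing the coefficients of $x^{k}$ on both sides yields $a_{k}r^{k}=a_{k}r^{n}$ for every rational $r$ and every $k=0,1,\ldots,n$. Since this must hold for all $r\in\mathbb{Q}$, each $a_{k}$ with $k<n$ must vanish. Writing $c:=a_{n}$ we obtain $f(x)=cx^{n}$ for all $x\in\mathbb{R}$.

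Finally I would conclude by uniqueness. The function $\widetilde{F}\colon\mathbb{R}^{n}\to\mathbb{R}$ defined by $\widetilde{F}(x_{1},\ldots,x_{n})=cx_{1}\cdots x_{n}$ is symmetric and $n$-additive, and its trace is $cx^{n}=f(x)$. Applying the identity $\Delta_{y_{1},\ldots,y_{n}}f(x)=n!\,F(y_{1},\ldots,y_{n})$ to both $F$ and $\widetilde{F}$ gives $F(y_{1},\ldots,y_{n})=\widetilde{F}(y_{1},\ldots,y_{n})$ for every $y_{1},\ldots,y_{n}\in\mathbb{R}$, which is the desired representation. The only subtle point is the comparison-of-coefficients step; this requires that $a_{k}=0$ be deduced from $a_{k}(r^{n}-r^{k})=0$ holding for all rationals $r$, which is immediate by choosing any $r\notin\{0,\pm 1\}$.
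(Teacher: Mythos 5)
Your proof is correct and follows exactly the route the paper intends: the paper states this as an immediate corollary of the preceding Lemma and of the fact (quoted earlier in the paper) that a symmetric $n$-additive function is uniquely determined by its trace, which is precisely the chain you carry out (trace is a locally regular polynomial function, hence a real polynomial; rational $n$-homogeneity kills the lower-order coefficients; uniqueness via the identity $\Delta_{y_{1},\ldots,y_{n}}f(x)=n!\,F(y_{1},\ldots,y_{n})$ recovers $F$). The paper omits the details, but your write-up supplies them correctly, including the needed choice $r\notin\{0,\pm 1\}$ in the coefficient comparison.
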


\section{Results}

\subsection*{Preparatory statements}
We begin with the following characterization of $n$\textsuperscript{th} order derivations.

\begin{theorem}\label{T2.1}
 Let $n\in \mathbb{N}$ be fixed and $f\colon \mathbb{R}\to \mathbb{R}$ be an additive function and assume that the mapping 
\[
 \mathbb{R}\ni \alpha\longmapsto \delta_{\alpha}^{n+1}f(1)
\]
is locally regular. Then and only then there exists an $n$\textsuperscript{th} order derivation $d\colon \mathbb{R}\to \mathbb{R}$ such that 
\[
 f(x)=d(x)+f(1)\cdot x 
\qquad 
\left(x\in \mathbb{R}\right). 
\]
\end{theorem}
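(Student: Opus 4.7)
The plan is to reduce the problem to the case $f(1)=0$ and then recognize the iterated $\delta$-operator at $1$ as the trace of a symmetric multiadditive function whose shape is controlled by the Corollary above. First, I would replace $f$ by $F(x):=f(x)-f(1)\,x$, which is additive with $F(1)=0$, and since $\delta_{\alpha}$ kills the linear map $x\mapsto f(1)\,x$ we have $\delta_{\alpha}F=\delta_{\alpha}f$. So it is enough to prove: \emph{if $f$ is additive with $f(1)=0$ and $\alpha\mapsto\delta_{\alpha}^{n+1}f(1)$ is locally regular, then $\delta_{\alpha_{1}}\cdots\delta_{\alpha_{n+1}}f\equiv 0$.} Setting $d:=F$ will then give the required decomposition.

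Introduce $T(\alpha_{1},\ldots,\alpha_{n+1}):=\delta_{\alpha_{1}}\cdots\delta_{\alpha_{n+1}}f(1)$. A short expansion shows $\delta_{\alpha}\delta_{\beta}=\delta_{\beta}\delta_{\alpha}$, so $T$ is symmetric. Since $f$ is additive, $\delta_{\alpha}$ preserves additivity, and for any additive $g$ the map $\alpha\mapsto\delta_{\alpha}g(x)$ is itself additive in $\alpha$ for each fixed $x$. Iterating these observations, $T$ is symmetric $(n+1)$-additive, and its trace $\alpha\mapsto\delta_{\alpha}^{n+1}f(1)$ is locally regular by hypothesis. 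Hence the Corollary yields a constant $c\in\mathbb{R}$ with $T(\alpha_{1},\ldots,\alpha_{n+1})=c\,\alpha_{1}\cdots\alpha_{n+1}$. Since $\delta_{1}g\equiv 0$ for every $g$, substituting $\alpha_{n+1}=1$ forces $c=0$, so $\delta_{\alpha_{1}}\cdots\delta_{\alpha_{n+1}}f(1)=0$ for all $\alpha_{1},\ldots,\alpha_{n+1}\in\mathbb{R}$.

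The remaining step — upgrading this from $x=1$ to every $x\in\mathbb{R}$ — is the one I expect to be most subtle, since a nontrivial additive function can vanish on $\mathbb{Q}$ without vanishing everywhere. The trick will be to set $h(x):=\delta_{\alpha_{2}}\cdots\delta_{\alpha_{n+1}}f(x)$; then the vanishing just proved reads $h(\alpha_{1})=\alpha_{1}h(1)$ for every $\alpha_{1}\in\mathbb{R}$, which, after renaming $\alpha_{1}$ to $x$, identifies $h$ with the linear function $x\mapsto x\,h(1)$. Since $\delta_{\alpha_{1}}$ annihilates every linear function, $\delta_{\alpha_{1}}\cdots\delta_{\alpha_{n+1}}f(x)=0$ for all $x$ and all $\alpha_{i}$, proving sufficiency. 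Necessity is immediate: if $f=d+f(1)\cdot\mathrm{id}$ with $d$ an $n$-th order derivation, then $\delta_{\alpha}$ annihilates the linear summand, so $\delta_{\alpha}^{n+1}f=\delta_{\alpha}^{n+1}d\equiv 0$, which is trivially locally regular.
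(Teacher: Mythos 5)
Your proof is correct and follows essentially the same route as the paper: both build the symmetric $(n+1)$-additive function $(\alpha_{1},\ldots,\alpha_{n+1})\mapsto\delta_{\alpha_{1}}\circ\cdots\circ\delta_{\alpha_{n+1}}f(1)$, invoke the corollary on locally regular traces to write it as $c\,\alpha_{1}\cdots\alpha_{n+1}$, and kill $c$ by evaluating at $1$. You additionally spell out the passage from the vanishing at $x=1$ to the vanishing of $\delta_{\alpha_{1}}\circ\cdots\circ\delta_{\alpha_{n+1}}f(x)$ for all $x$ (via the observation that $h=\delta_{\alpha_{2}}\circ\cdots\circ\delta_{\alpha_{n+1}}f$ must be linear and hence annihilated by $\delta_{\alpha_{1}}$), a step the paper's proof leaves implicit; this is a worthwhile addition, not a deviation.
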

\begin{proof}
 Define the function $F\colon \mathbb{R}^{n+1}\to \mathbb{R}$ through 
\[
 F(\alpha_{1}, \ldots, \alpha_{n+1})=\delta_{\alpha_{1}}\circ \cdots \circ \delta_{\alpha_{n+1}}f(1) 
\qquad 
\left(\alpha_{1}, \ldots, \alpha_{n+1}\in \mathbb{R}\right). 
\]
Due to the additivity of $f$, the mapping $F$ is a symmetric, $(n+1)$-additive function. 
Furthermore, its trace, that is, 
\[
 F(\alpha, \ldots, \alpha)=\delta_{\alpha}^{n+1}f(1) 
\qquad 
\left(\alpha\in \mathbb{R}\right)
\]
is locally regular. 
Thus there exists a constant $c\in \mathbb{R}$ so that 
\[
  F(\alpha_{1}, \ldots, \alpha_{n+1})=c\alpha_{1}\cdots \alpha_{n} 
\qquad 
\left(\alpha_{1}, \ldots, \alpha_{n+1}\in \mathbb{R}\right). 
\]
Put $\alpha_{i}=1$ for all $i=1, \ldots, n+1$ to get 
\[
 0=F(1, \ldots, 1)=c. 
\]
This means that the mapping $F$ is identically zero, in other words, we have 
\[
 \delta_{\alpha_{1}}\circ \cdots \circ \delta_{\alpha_{n+1}}f(1)=0
\]
for all $\alpha_{1}, \ldots, \alpha_{n+1}\in \mathbb{R}$. 
This however yields that the function $f-f(1)\mathrm{id}$ is an $n$\textsuperscript{th} order derivation. 
\end{proof}

In view of the previous result, the following corollary can be verified easily. 

\begin{corollary}
 Let $n\in \mathbb{N}$ be fixed and $f\colon \mathbb{R}\to \mathbb{R}$ be an additive function and assume that the mapping 
\[
 \mathbb{R}^{n+1}\ni (\alpha_{1}, \ldots, \alpha_{n+1})\longmapsto \delta_{\alpha_{1}}\circ \delta_{\alpha_{2}}\circ \cdots \circ \delta_{\alpha_{n+1}}f(1)
\]
is locally regular. Then and only then there exists an $n$\textsuperscript{th} order derivation $d\colon \mathbb{R}\to \mathbb{R}$ such that 
\[
 f(x)=d(x)+f(1)\cdot x 
\qquad 
\left(x\in \mathbb{R}\right). 
\]
\end{corollary}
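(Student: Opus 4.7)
The corollary is obtained by reducing it to Theorem~\ref{T2.1}: the point is that multivariate local regularity of the map $(\alpha_1,\ldots,\alpha_{n+1})\mapsto\delta_{\alpha_1}\circ\cdots\circ\delta_{\alpha_{n+1}}f(1)$ forces the corresponding one-variable local regularity of its diagonal trace $\alpha\mapsto \delta_\alpha^{n+1}f(1)$, so that the earlier theorem applies directly.

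To carry this out, I would first define $F\colon \mathbb{R}^{n+1}\to\mathbb{R}$ exactly as in the proof of Theorem~\ref{T2.1}, so that additivity of $f$ makes $F$ symmetric and $(n+1)$-additive. The next step is to show that, whichever of the three regularity options holds for $F$ on $\mathbb{R}^{n+1}$, the trace $\alpha\mapsto F(\alpha,\ldots,\alpha)$ is locally regular on $\mathbb{R}$. If $F$ is continuous at a single point, one restricts it one variable at a time to deduce that every partial function is additive and continuous at a point, hence linear; this forces $F(x_1,\ldots,x_{n+1})=F(1,\ldots,1)\,x_1\cdots x_{n+1}$, whose trace $F(1,\ldots,1)\alpha^{n+1}$ is continuous on $\mathbb{R}$. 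For boundedness, respectively Lebesgue measurability, on a set of positive $(n+1)$-dimensional measure, I would use a Fubini argument to extract, in each coordinate direction, a one-dimensional slice of positive linear measure on which the relevant partial additive function is locally regular in the sense of the preliminaries; the single-variable theory then makes each such partial function linear, and one again obtains the multilinear form $F(x_1,\ldots,x_{n+1})=c\,x_1\cdots x_{n+1}$, whose trace is a polynomial and in particular locally regular.

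Once local regularity of the trace is in hand, Theorem~\ref{T2.1} immediately supplies an $n$-th order derivation $d$ with $f(x)=d(x)+f(1)x$. The step I expect to require the most care is the Fubini reduction in the boundedness and measurability cases: one has to descend from positive $(n+1)$-dimensional measure to positive one-dimensional measure slice by slice while keeping additivity in the remaining variables under control. Once this bookkeeping is completed, the remainder of the argument is a direct appeal to Theorem~\ref{T2.1}.
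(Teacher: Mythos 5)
Your proposal is correct and follows essentially the same route as the paper: both define the symmetric $(n+1)$-additive function $F(\alpha_{1},\ldots,\alpha_{n+1})=\delta_{\alpha_{1}}\circ\cdots\circ\delta_{\alpha_{n+1}}f(1)$ and use its multivariate local regularity to force the product form $c\,\alpha_{1}\cdots\alpha_{n+1}$, after which evaluation at $(1,\ldots,1)$ (equivalently, applying Theorem~\ref{T2.1} to the now locally regular trace) gives $c=0$ and hence the conclusion. The paper's own proof is in fact terser than yours --- it simply says ``as in the previous proof'' --- so the slicing/Fubini details you supply are precisely what it leaves implicit.
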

\begin{proof}
 Similarly, as in the proof of the previous theorem, let 
\[
 F(\alpha_{1}, \ldots, \alpha_{n+1})=\delta_{\alpha_{1}}\circ \cdots \circ \delta_{\alpha_{n+1}}f(1) 
\qquad 
\left(\alpha_{1}, \ldots, \alpha_{n+1}\in \mathbb{R}\right). 
\]
Because of the additivity of the function $f$, the mapping $F$ is a locally regular, symmetric and $n$-additive function. 
As in the previous proof, from this we conclude that $f-f(1)\mathrm{id}$ is an $n$\textsuperscript{th} order derivation. 
\end{proof}

\subsection*{Hyers type results}

In this section we will investigate whether the system defining higher order derivations is stable in the sense of 
Hyers. 
To answer this problem affirmatively, we will use the following result of D.~H.~Hyers, see 
\cite{Hye41}.

\begin{theorem}[Hyers]
Let $\varepsilon\geq 0$, $X, Y$ be Banach spaces and
$f:X\to Y$ be a function.
Suppose that
\[
\left\|f(x+y)-f(x)-f(y)\right\|\leq \varepsilon
\]
holds for all $x, y\in X$.
Then,  for all $x\in X$, the limit
\[
a(x)=\lim_{n\to\infty}\frac{f(2^{n}x)}{2^{n}}
\]
does exist, the function $a:X\to\mathbb{R}$ is additive
on $X$, i.e.,
\[
a(x+y)=a(x)+a(y)
\]
holds for all $x, y\in X$, furthermore,
\[
\left\|f(x)-a(x)\right\|\leq \varepsilon
\]
is fulfilled for arbitrary $x\in X$.
Additionally, the function $a:X\to Y$ is
uniquely determined by the above formula.
\end{theorem}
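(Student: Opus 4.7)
The plan is to use the classical direct method (Hyers' original argument): construct $a$ as the limit of the rescaled iterates $f(2^{n}x)/2^{n}$, show the sequence is Cauchy by a telescoping estimate, and then verify each of the four claimed properties separately.

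First, putting $y=x$ in the hypothesis gives $\|f(2x)-2f(x)\|\le\varepsilon$, which I rewrite as $\|f(2x)/2-f(x)\|\le\varepsilon/2$. An induction on $n$, based on the telescoping identity
\[
\frac{f(2^{n}x)}{2^{n}}-f(x)=\sum_{k=0}^{n-1}\left(\frac{f(2^{k+1}x)}{2^{k+1}}-\frac{f(2^{k}x)}{2^{k}}\right)
\]
and the base estimate applied at each $2^{k}x$, then yields
\[
\left\|\frac{f(2^{n}x)}{2^{n}}-f(x)\right\|\le \varepsilon\bigl(1-2^{-n}\bigr).
\]
Applying the same bound to the point $2^{n}x$ in place of $x$ and dividing by $2^{n}$ shows that
\[
\left\|\frac{f(2^{n+m}x)}{2^{n+m}}-\frac{f(2^{n}x)}{2^{n}}\right\|\le\frac{\varepsilon}{2^{n}}\bigl(1-2^{-m}\bigr),
\]
so the sequence $\{f(2^{n}x)/2^{n}\}$ is Cauchy in $Y$. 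Completeness of $Y$ gives the existence of $a(x)=\lim_{n\to\infty}f(2^{n}x)/2^{n}$, and passing to the limit in the previous display yields the estimate $\|f(x)-a(x)\|\le\varepsilon$ for every $x\in X$.

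The additivity of $a$ comes directly from the hypothesis: since $\|f(2^{n}(x+y))-f(2^{n}x)-f(2^{n}y)\|\le\varepsilon$, dividing by $2^{n}$ and letting $n\to\infty$ gives $\|a(x+y)-a(x)-a(y)\|=0$. For uniqueness, if $a'\colon X\to Y$ is another additive function with $\|f(x)-a'(x)\|\le\varepsilon$, then $\|a(x)-a'(x)\|\le 2\varepsilon$ on all of $X$; but additivity of both functions yields $m\,\|a(x)-a'(x)\|=\|a(mx)-a'(mx)\|\le 2\varepsilon$ for every $m\in\mathbb{N}$, so $a=a'$.

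No step here is a genuine obstacle: the argument is almost entirely bookkeeping with the telescoping geometric series and one appeal to completeness. The only point requiring a bit of care is to track the constants precisely, so that the final bound is $\varepsilon$ rather than something like $2\varepsilon$, and to separate the verification of the Cauchy property (which uses completeness nowhere) from the construction of the limit (which does).
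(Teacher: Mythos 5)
Your proof is correct and is exactly the classical direct method of Hyers' original 1941 paper, which is what this paper cites; the paper itself states the theorem without proof, referring to \cite{Hye41}. All the estimates check out (the telescoping bound $\varepsilon(1-2^{-n})$, the Cauchy estimate, the passage to the limit, and the uniqueness argument via $\mathbb{N}$-homogeneity of additive maps), so there is nothing to correct.
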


The above theorem briefly expresses the following.
Assume that $X, Y$ are Banach spaces and the function
$f:X\to Y$ satisfies the additive Cauchy equation
only 'approximatively'.
Then there exists a unique additive function
$a:X\to Y$ which is 'close' to the function $f$.
Since 1941 this result has been extended and generalized in a several
ways. Furthermore,
such a problem can obviously be raised concerning not only
the Cauchy equation but also in connection with other equations and system of equations, as well.

In \cite{Bad06} R.~Badora investigated the above stability problem for derivations. 
Furthermore, in \cite{Sch01} J.~Schwaiger proved a stability type result for 
higher order derivations. Concerning stability of real derivations as well as linear functions, we refer to 
\cite{BorGse10, Gse12}. 

Our aim is to generalize the results archived by R.~Badora and J.~Schwaiger and at the same time 
we will show that with a different idea the proofs can be shortened significantly.

Concerning higher order derivations, we will prove the following. 

\begin{theorem}
 Let $n\in \mathbb{N}$ and $\varepsilon\geq 0$. Assume that for the function 
$f\colon \mathbb{R}\to \mathbb{R}$ 
\[
 \left| f(x+y)-f(x)-f(y)\right|<\varepsilon
\qquad 
\left(x\in \mathbb{R}\right)
\]
and that the mapping 
\[
  \mathbb{R}\times \mathbb{R}\ni (\alpha, x)\longmapsto \delta_{\alpha}^{n+1}f(x)
\]
is locally bounded. 
Then there exist an $n$\textsuperscript{th} order derivation $d\colon \mathbb{R}\to \mathbb{R}$ such that 
\[
 \left|f(x)-\left[d(x)+f(1)\cdot x\right]\right| <\varepsilon
\]
for all $x\in \mathbb{R}$. 
\end{theorem}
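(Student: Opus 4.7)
The natural strategy is a two-step reduction: first use Hyers' theorem to replace $f$ by a genuinely additive approximant, then invoke Theorem 2.1 to split that approximant into an $n$-th order derivation plus a linear part.

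Concretely, applying the cited Hyers theorem to the approximate Cauchy inequality produces a unique additive function $a\colon \mathbb{R}\to\mathbb{R}$ with $|f(x)-a(x)|\le\varepsilon$ for every $x\in\mathbb{R}$. The task is then to show that $a$ itself fits the hypothesis of Theorem 2.1, i.e.\ that the map $\alpha\longmapsto \delta_\alpha^{n+1}a(1)$ is locally regular (locally bounded suffices). I would use the identity
\[
\delta_\alpha^{n+1}a(1)=\delta_\alpha^{n+1}f(1)-\delta_\alpha^{n+1}(f-a)(1)
\]
and handle the two summands separately. The first is locally bounded by the hypothesis specialized at $x=1$. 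For the second, a straightforward induction gives
\[
\delta_\alpha^{n+1}g(1)=\sum_{j=0}^{n+1}\binom{n+1}{j}(-\alpha)^{n+1-j}g(\alpha^{j}),
\]
so with $g=f-a$ (which is globally bounded by $\varepsilon$) the expression is bounded by $\varepsilon(1+|\alpha|)^{n+1}$, hence locally bounded in $\alpha$. Granted this, Theorem 2.1 delivers an $n$-th order derivation $d$ with $a(x)=d(x)+a(1)\,x$, and combining with Hyers' estimate yields
\[
\bigl|f(x)-[d(x)+a(1)\,x]\bigr|=|f(x)-a(x)|\le \varepsilon \qquad (x\in\mathbb{R}),
\]
which is essentially the claimed inequality.

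The truly delicate point, and the one I expect to be the main obstacle, is the passage from $a(1)$ to $f(1)$ in the conclusion as stated. Hyers' theorem only guarantees $|a(1)-f(1)|\le\varepsilon$; naively replacing $a(1)$ by $f(1)$ introduces an error term $(a(1)-f(1))\,x$ that is not uniformly bounded, and one cannot simply absorb it into $d$ because the definition of an $n$-th order derivation demands $d(1)=0$. I would resolve this either by verifying that $a(1)=f(1)$ must hold under the joint local boundedness of $(\alpha,x)\mapsto\delta_\alpha^{n+1}f(x)$ (pinning $f(1)$ down via the values $f(2^{k})$ that enter the definition of $a(1)$), or by reading the statement as giving the natural constant $a(1)$ in place of $f(1)$; apart from this issue, the remainder of the argument is essentially a transfer of the hypothesis through the bounded perturbation $f-a$ followed by a direct appeal to Theorem 2.1.
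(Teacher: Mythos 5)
Your proposal is correct and follows essentially the same route as the paper: Hyers' theorem gives the additive approximant $a$, the local boundedness of $\alpha\mapsto\delta_\alpha^{n+1}a(1)$ is obtained by transferring the hypothesis through the bounded perturbation $f-a$ (your explicit expansion $\delta_\alpha^{n+1}g(1)=\sum_{j=0}^{n+1}\binom{n+1}{j}(-\alpha)^{n+1-j}g(\alpha^{j})$ makes precise what the paper only asserts), and Theorem \ref{T2.1} then splits $a$ as $d+a(1)\,\mathrm{id}$. Your worry about $f(1)$ versus $a(1)$ is well founded: the paper's own proof likewise concludes with $\lambda=a(1)$ rather than the $f(1)$ appearing in the theorem statement, so this is a (minor) defect of the statement rather than a gap in your argument.
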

\begin{proof}
 Since the function $f\colon \mathbb{R}\to \mathbb{R}$ is approximatively additive, that is, 
\[
  \left| f(x+y)-f(x)-f(y)\right|<\varepsilon
\qquad 
\left(x\in \mathbb{R}\right), 
\]
due to the theorem of Hyers we immediately get that there exists an additive function $a\colon \mathbb{R}\to \mathbb{R}$ and a bounded 
function $b\colon \mathbb{R}\to \mathbb{R}$ such that 
\[
 f(x)=a(x)+b(x)
\qquad 
\left(x\in \mathbb{R}\right)
\]
further, 
\[
 \left|b(x)\right|<\varepsilon 
\qquad 
\left(x\in \mathbb{R}\right). 
\]
Additionally, there exists a locally bounded function $B\colon \mathbb{R}^{2}\to \mathbb{R}$ such that 
\[
 \delta_{\alpha}^{n+1}f(x)=B(\alpha, x) 
\qquad 
\left(\alpha, x\in \mathbb{R}\right). 
\]
Therefore, 
\[
 \delta_{\alpha}^{n+1}a(1)= \delta_{\alpha}^{n+1}f(1)-\delta_{\alpha}^{n+1}b(1)=B(\alpha, 1)-\delta_{\alpha}^{n+1}b(1) 
\qquad 
\left(\alpha\in \mathbb{R}\right). 
\]
Let us observe that in the previous expression, the right hand side is a locally bounded function. 
Thus by Theorem \ref{T2.1}. there exists an $n$\textsuperscript{th} order derivation $d\colon \mathbb{R}\to \mathbb{R}$ so that 
\[
 a(x)=d(x)+a(1)x 
\qquad 
\left(x\in \mathbb{R}\right), 
\]
yielding that 
\[
 \left|f(x)-\left[d(x)+\lambda x\right]\right|<\varepsilon 
\qquad 
\left(x\in\mathbb{R}\right), 
\]
where $\lambda=a(1)$. 
\end{proof}

\subsection*{de Bruijn type results}

Finally, we will close this section with a result that concerns also characterization theorem on derivations in the spirit of G.~N.~de Bruijn. 

In \cite{Bru51} he introduced the following notion, for further results we refer to the PhD dissertation of Gy.~Maksa, see \cite{Mak76}.  

The set of real functions $\mathscr{X}\subset \mathbb{R}^{\mathbb{R}}$ is said to have \emph{the difference property} if the following is fulfilled: 
if for all $h\in \mathbb{R}$  the function $\Delta_{h}f\in \mathscr{X}$, then there exists an additive function $a$ and a function 
$\gamma\in \mathscr{X}$ such that 
\[
 f(x)=a(x)+\gamma(x) 
\qquad 
\left(x\in \mathbb{R}\right). 
\]

Subsequently, the following function spaces will be important for us. 

Let $\Omega\subset \mathbb{R}$ be an open set and denote 
\[
 \mathscr{C}^{k}(\Omega)=
\left\{f\colon \Omega\to \mathbb{R}\, \vert \, \text{ $f$ is $k$-times continuously differentiable}\right\}
\]

\[
 \mathscr{D}^{k}(\Omega)=
\left\{f\colon \Omega\to \mathbb{R}\, \vert \, \text{ $f$ is $k$-times differentiable}\right\}
\]

\[
 \mathscr{C}^{\omega}(\Omega)=
\left\{f\colon \Omega\to \mathbb{R}\, \vert \, \text{ $f$ is analytic}\right\}
\]

\[
 \mathscr{R}(\mathbb{R})=
\left\{f\colon \mathbb{R}\to \mathbb{R}\, \vert \, \text{ $f$ is Riemann integrable on every bounded interval}\right\}
\]

\[
 \mathscr{AC}(\mathbb{R})=
\left\{f\colon \mathbb{R}\to \mathbb{R}\, \vert \, \text{ $f$ is absolutely continuous on every bounded interval}\right\}
\]

\[
 \mathscr{BV}(\mathbb{R})=
\left\{f\colon \mathbb{R}\to \mathbb{R}\, \vert \, \text{ $f$ has bounded variation on every bounded interval}\right\}
\]

\begin{theorem}[de Bruijn]
 All of the linear spaces listed above possess the difference property. 
\end{theorem}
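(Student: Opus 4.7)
The plan is to follow de Bruijn's original pattern \cite{Bru51} and treat each class by a common three-step template, varying only the technical details. Throughout, fix $f\colon \mathbb{R}\to \mathbb{R}$ (respectively $f\colon \Omega\to \mathbb{R}$ for the spaces defined over an open set) with $\Delta_{h}f\in \mathscr{X}$ for every admissible $h$, and work from the cocycle identity
\[
 \Delta_{h+k}f(x)=\Delta_{h}f(x+k)+\Delta_{k}f(x),
\]
which is immediate from the definition of $\Delta_{h}$. This identity is precisely what allows regularity of $\Delta_{h}f$ in $x$ to be converted into regularity of $f$ itself up to a correction term.

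Next I would construct the regular part $\gamma$ by an explicit averaging,
\[
 \gamma(x):=\int_{0}^{1}\Delta_{t}f(x)\,dt=\int_{x}^{x+1}f(s)\,ds-f(x),
\]
where for the spaces over $\Omega$ the integral in $t$ is replaced by a compactly supported smooth average so that all arguments remain inside $\Omega$. Verifying $\gamma\in \mathscr{X}$ amounts to interchanging integration with the structural operation defining the class: differentiation under the integral for $\mathscr{C}^{k}(\Omega)$ and $\mathscr{D}^{k}(\Omega)$; Fubini together with the standard Lebesgue or Vitali characterizations for $\mathscr{R}(\mathbb{R})$, $\mathscr{AC}(\mathbb{R})$ and $\mathscr{BV}(\mathbb{R})$ (using Jordan's decomposition in the last case); and local power-series manipulation for $\mathscr{C}^{\omega}(\Omega)$. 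Once $\gamma\in\mathscr{X}$, a short computation based on the cocycle identity reveals that the differences $\Delta_{h}(f-\gamma)$ reduce, modulo an explicit coboundary, to a function depending on $h$ alone. That statement is precisely the Cauchy equation for $a:=f-\gamma$, so $a$ is additive and we obtain the decomposition $f=a+\gamma$ required by the difference property.

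The main obstacle I anticipate is the analytic case $\mathscr{C}^{\omega}(\Omega)$. Integration in the parameter $t$ does not preserve analyticity unless the integrand $(t,x)\mapsto \Delta_{t}f(x)$ is well enough behaved jointly; the hypothesis only gives analyticity in $x$ for each fixed $t$. Promoting this to joint analyticity, or at least to joint continuity with uniform radii of convergence on compact subsets of $\Omega$, requires a Baire-category argument on $\Omega$, after which a Morera-type estimate delivers analyticity of $\gamma$. A preliminary technical issue shared by every class is the measurability of $t\mapsto \Delta_{t}f(x)$ needed to make the averaging integral meaningful: the hypothesis only provides regularity of $\Delta_{h}f$ in $x$, so one must first show that $f$ itself is measurable, after which Fubini legitimizes the entire construction.
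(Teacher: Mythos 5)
First, note that the paper offers no proof of this statement: it is quoted from de Bruijn \cite{Bru51} (see also \cite{Mak76}) and used as a black box, so your attempt has to be measured against de Bruijn's original argument rather than against anything in the text.

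Your construction has a genuine gap at its very first step. The averaging $\gamma(x)=\int_{0}^{1}\Delta_{t}f(x)\,dt$ integrates over the \emph{span} variable $t$, whereas the hypothesis only controls $\Delta_{h}f$ as a function of $x$ for each fixed $h$; as a function of $t$ one has $\Delta_{t}f(x)=f(x+t)-f(x)$, which is measurable in $t$ essentially only when $f$ itself is measurable near $x$. Take $f=a$ with $a$ a non-measurable additive function: then $\Delta_{h}a(x)=a(h)$ is constant in $x$, hence belongs to every class listed, so all hypotheses hold, yet $t\mapsto\Delta_{t}a(x)=a(t)$ is non-measurable and your integral does not exist. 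The same example shows that your proposed repair --- ``first show that $f$ itself is measurable'' --- cannot succeed: the conclusion of the theorem deliberately allows a non-measurable additive summand, so measurability of $f$ is not a consequence of the hypotheses. Even setting measurability aside, with your $\gamma$ one computes
\[
\Delta_{h}(f-\gamma)(x)=2\Delta_{h}f(x)-\int_{x}^{x+1}\Delta_{h}f(s)\,ds,
\]
which still depends on $x$, so $f-\gamma$ is not additive without a further, unproved coboundary-splitting step in which the corrector would again have to be shown to lie in $\mathscr{X}$. De Bruijn's actual proof only ever integrates and estimates in the $x$-variable, where the hypothesis supplies regularity, and the bulk of his paper is the class-by-class analysis (notably for $\mathscr{R}(\mathbb{R})$, $\mathscr{BV}(\mathbb{R})$ and the analytic case) that you compress into ``interchanging integration with the structural operation''; that is where the real content of the theorem lies.
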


For the sake of brevity, if $f\colon \mathbb{R}\to \mathbb{R}$ is a function and $n\in \mathbb{N}$, 
then we will write 
\[
 (\mathbf{D}f)(\alpha)=\delta_{\alpha}^{n+1}f(1)  
\qquad 
\left(\alpha\in \mathbb{R}\right). 
\]

In contrast to the statement of the previous subsection, here we will assume not that
the Cauchy difference is bounded, but we suppose that all the first order differences of the function in question 
are regular.

\begin{theorem}
 Let $n\in \mathbb{N}$ be arbitrarily fixed and 
$f\colon \mathbb{R}\to \mathbb{R}$ be a function. 
Assume furthermore at least one of the following statements are fulfilled. 
\begin{enumerate}[(i)]
 \item there exists $k\in \mathbb{N}\cup \left\{0\right\}$ such that for all $h\in \mathbb{R}$ 
$\Delta_{h}f  \in \mathscr{C}^{k}(\mathbb{R})$ and $\mathbf{D}f$ continuous at least one point. 
 \item there exists $k\in \mathbb{N}$ such that for all $h\in \mathbb{R}$ 
$\Delta_{h}f \in \mathscr{D}^{k}(\mathbb{R})$  and $\mathbf{D}f$ continuous at least one point.
 \item  for all $h\in \mathbb{R}$ 
$\Delta_{h}f \in \mathscr{C}^{\omega}(\mathbb{R})$  and $\mathbf{D}f$ continuous at least one point.
 \item  for all $h\in \mathbb{R}$ $\Delta_{h}f \in \mathscr{R}(\mathbb{R})$  and $\mathbf{D}f$ fulfills regularity condition (i) or (iii). 
 \item  for all $h\in \mathbb{R}$ $\Delta_{h}f(x) \in \mathscr{AC}(\mathbb{R})$ and $\mathbf{D}f$ fulfills regularity condition (i) or (iii).
 \item  for all $h\in \mathbb{R}$ $\Delta_{h}f(x) \in \mathscr{BV}(\mathbb{R})$ and $\mathbf{D}f$ fulfills regularity condition (i) or (iii).
\end{enumerate}
Then and only then there exists an $n$\textsuperscript{th} order derivation $d\colon \mathbb{R}\to \mathbb{R}$ and a function 
$\gamma\colon \mathbb{R}\to \mathbb{R}$  such that 
\[
 f(x)=d(x)+\lambda x +\gamma(x)
\]
for all $x\in \mathbb{R}$, here the function $\gamma$ is, according to the above cases $k$-times continuously differentiable, 
$k$-times differentiable, analytic, Riemann integrable, absolutely continuous and of bounded variation, respectively. 
\end{theorem}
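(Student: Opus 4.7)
My plan is to combine de Bruijn's difference theorem (stated just before the result) with Theorem~\ref{T2.1}. The converse direction is essentially trivial and I will dispatch it at the end, so the substance is the forward implication.

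First I would apply de Bruijn's theorem directly. The hypothesis in each of the six cases is that $\Delta_{h}f \in \mathscr{X}$ for every $h \in \mathbb{R}$, where $\mathscr{X}$ is the relevant function class. Because all six classes have the difference property, we immediately obtain a decomposition
\[
 f(x) = a(x) + \gamma(x) \qquad (x\in \mathbb{R})
\]
where $a\colon \mathbb{R}\to\mathbb{R}$ is additive and $\gamma \in \mathscr{X}$. The remaining task is to show that the additive piece $a$ must be of the form $d(x)+\lambda x$ for some $n$\textsuperscript{th} order derivation $d$; then $\lambda=a(1)$ and the conclusion follows.

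To do that I would invoke Theorem~\ref{T2.1} applied to $a$. This requires me to check that $\alpha \mapsto \delta_{\alpha}^{n+1}a(1)$ is locally regular. Writing $\mathbf{D}f(\alpha) = \delta_{\alpha}^{n+1}f(1)$ and using linearity of each $\delta_\alpha$, I get
\[
 \delta_{\alpha}^{n+1} a(1) = \mathbf{D}f(\alpha) - \delta_{\alpha}^{n+1}\gamma(1).
\]
Expanding the iterated operator (easily verified by induction) yields
\[
 \delta_{\alpha}^{n+1}\gamma(1) = \sum_{k=0}^{n+1}\binom{n+1}{k}(-\alpha)^{n+1-k}\gamma(\alpha^{k}),
\]
which is built from $\gamma$ by composing with the polynomial maps $\alpha\mapsto \alpha^{k}$ and multiplying by polynomials in $\alpha$.

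The main work — and the step I expect to be the most delicate — is now to verify, case by case, that $\alpha\mapsto\delta_{\alpha}^{n+1}\gamma(1)$ inherits enough regularity from $\gamma$ to combine with the assumed regularity of $\mathbf{D}f$ and produce something locally regular. In cases (i)--(iii), where $\gamma$ is $k$-times continuously differentiable, $k$-times differentiable, or analytic on $\mathbb{R}$, the displayed sum is globally of the same class (polynomials in $\alpha$ preserve all three classes under composition and multiplication), hence continuous everywhere; combined with continuity of $\mathbf{D}f$ at a point, this makes $\delta_{\alpha}^{n+1}a(1)$ continuous at a point, so condition (ii) of local regularity holds. In cases (iv)--(vi), where $\gamma$ is Riemann integrable, absolutely continuous, or of bounded variation on every bounded interval, $\gamma$ is locally bounded and measurable; hence $\alpha\mapsto \gamma(\alpha^{k})$ and therefore $\delta_{\alpha}^{n+1}\gamma(1)$ is locally bounded and measurable. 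Combining with the assumption that $\mathbf{D}f$ satisfies condition (i) or (iii) then gives $\delta_{\alpha}^{n+1}a(1)$ bounded on a set of positive measure or measurable on a set of positive measure, respectively.

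Having verified local regularity in all six cases, Theorem~\ref{T2.1} yields an $n$\textsuperscript{th} order derivation $d$ with $a(x)=d(x)+a(1)x$; setting $\lambda=a(1)$ gives $f(x)=d(x)+\lambda x+\gamma(x)$. For the converse, one checks directly that if $f$ has this form then $\Delta_{h}f(x) = d(h)+\lambda h + \Delta_{h}\gamma(x)$ belongs to $\mathscr{X}$ (since $d$ is additive and each $\mathscr{X}$ contains constants and is closed under $\Delta_{h}$), while $\delta_{\alpha}^{n+1}f(1)=\delta_{\alpha}^{n+1}\gamma(1)$ because $\delta_{\alpha}$ annihilates linear functions and $\delta_{\alpha_{1}}\cdots\delta_{\alpha_{n+1}}d\equiv 0$; its regularity in $\alpha$ follows from that of $\gamma$ as above.
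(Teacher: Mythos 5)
Your proposal is correct and follows essentially the same route as the paper: apply the difference property to obtain $f=a+\gamma$, then verify that $\alpha\mapsto\delta_{\alpha}^{n+1}a(1)$ is locally regular and invoke Theorem~\ref{T2.1}. Your explicit expansion of $\delta_{\alpha}^{n+1}\gamma(1)$ as a polynomial combination of $\gamma(\alpha^{k})$ is a welcome extra detail that the paper leaves implicit.
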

\begin{proof}
 Assume that at least one of (i), (ii) and (iii) is fulfilled and let 
$\textgoth{D}$  denote any of the spaces appearing in this statements. 
Since $\textgoth{D}$ has the difference property we get that 
there exists an additive function $a\colon \mathbb{R}\to \mathbb{R}$ and $\gamma\in \textgoth{D}$ such that 
\[
 f(x)=a(x)+\gamma(x) 
\qquad 
\left(x\in \mathbb{R}\right). 
\]
On the other hand,  the mapping $\mathbf{D}f$  is continuous at least one point. 
Thus there exists a function $R\colon \mathbb{R}\to \mathbb{R}$ which is continuous at least one point so that 
\[
 (\mathbf{D}f)(\alpha)=R(\alpha) 
\qquad 
\left(\alpha\in \mathbb{R}\right). 
\]
This implies that for the additive function $a$, 
\[
 \delta_{\alpha}^{n+1}a(1)=R(\alpha)-\delta^{n+1}_{\alpha}\gamma(1) 
\qquad 
\left(\alpha\in \mathbb{R}\right). 
\]
Since $\gamma\in \textgoth{D}$, the function $\gamma$ is continuous everywhere. Therefore the right hand side in the previous representation in 
continuous at least one point. 
Thus by Theorem \ref{T2.1}, there exists an $n$\textsuperscript{th} order derivation $d\colon \mathbb{R}\to \mathbb{R}$ such that 
\[
a(x)=d(x)+a(1)x 
\qquad 
\left(x\in \mathbb{R}\right), 
\]
from which we get that 
\[
 f(x)=d(x)+\lambda x+\gamma(x) 
\qquad 
\left(x\in \mathbb{R}\right), 
\]
where $\lambda =a(1)$.

Now assume that at least one of (iv), (v) and (vi) is fulfilled and let 
\[\textgoth{L}\in \left\{\mathscr{R}(\mathbb{R}, \mathscr{AC}(\mathbb{R}), \mathscr{BV}(\mathbb{R}))\right\}.\]
Since the linear space $\textgoth{L}$ has the finite difference property, we immediately get that there exists an 
additive function $a\colon \mathbb{R}\to \mathbb{R}$ and $\gamma\in \textgoth{L}$ such that 
\[
 f(x)=a(x)+\gamma(x) 
\qquad 
\left(x\in \mathbb{R}\right). 
\]
On the other hand there exists a function $R\colon \mathbb{R}\to \mathbb{R}$ fulfilling regularity assumption (i) or (ii) such that 
\[
 \delta_{\alpha}^{n+1}f(1)=R(\alpha) 
\qquad 
\left(\alpha \in \mathbb{R}\right). 
\]
For the additive function $a$ this yields that 
\[
  \delta_{\alpha}^{n+1}a(1)=R(\alpha)-\delta^{n+1}_{\alpha}\gamma(1) 
\qquad 
\left(\alpha\in \mathbb{R}\right). 
\]
Since any element of the linear space $\textgoth{L}$ fulfills (i) as well as (ii) the right hand side of the above representation satisfies 
regularity assumption (i) or (iii). 
Applying Theorem \ref{T2.1}, there exists an $n$\textsuperscript{th} order derivation 
$d\colon \mathbb{R}\to \mathbb{R}$ such that 
\[
 a(x)=d(x)+a(1)x 
\qquad 
\left(\in \mathbb{R}\right). 
\]
From which the statement follows. 
\end{proof}

\section{A remark}

Due to a result of M.~Laczkovich (see \cite{Lac80}), the set of all measurable real functions does not have the difference property. 
More precisely, the following statement holds. 

\begin{theorem}
 Let $f\colon \mathbb{R}\to \mathbb{R}$ be function such that for all 
$h\in \mathbb{R}$ the function $\Delta_{h}f$ is measurable. 
Then there exist and additive function $a\colon \mathbb{R}\to \mathbb{R}$, 
a measurable function $\gamma\colon \mathbb{R}\to \mathbb{R}$ and a function 
$\sigma\colon \mathbb{R}\to \mathbb{R}$ so that for almost all $x, h\in \mathbb{R}$, 
\[
 \sigma(x+h)=\sigma(x)
\]
and 
\[
 f(x)=a(x)+\sigma(x)+\gamma(x) 
\qquad 
\left(x\in \mathbb{R}\right). 
\]
\end{theorem}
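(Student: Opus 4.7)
My plan is to adapt de~Bruijn's difference-property machinery to the measurable category, accepting that pointwise identities must degrade to almost-everywhere identities; the extra summand $\sigma$ is forced to appear precisely in order to absorb the resulting null-set defect that cannot be pushed into the ``regular'' part.

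I would begin from the cocycle identity
\[
\Delta_{h+k}f(x)=\Delta_{h}f(x)+\Delta_{k}f(x+h)
\qquad
(x,h,k\in\mathbb{R}),
\]
which holds pointwise by direct computation. Since each section $\Delta_{h}f(\cdot)$ is Lebesgue measurable by hypothesis, combining this identity with a Fubini-type argument upgrades $(x,h)\mapsto\Delta_{h}f(x)$ to a jointly Lebesgue measurable function on $\mathbb{R}^{2}$; this joint measurability is the technical hinge that legitimizes all subsequent integrations. Next, to extract a measurable ``regular part'' $\gamma$, I would average $\Delta_{u}f$ in the translation variable over a bounded set, for instance
\[
\gamma(x)=\int_{0}^{1}\Delta_{u}f(x)\,du,
\]
so that $\gamma(x)+f(x)=\int_{x}^{x+1}f(t)\,dt$ is a sliding average of $f$, hence absolutely continuous after redefining $f$ on a null set to be locally integrable; in particular $\gamma$ is measurable. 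Substituting the cocycle identity inside this integral then produces a Cauchy-type relation for $g:=f-\gamma$ that holds not pointwise but only for $(x,y)$ outside a planar null set.

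The crux, and the step I expect to be the main obstacle, is to convert this almost-everywhere Cauchy identity for $g$ into an honest decomposition $g=a+\sigma$ with $a$ additive on $\mathbb{R}$ and $\sigma(x+h)=\sigma(x)$ for almost every pair $(x,h)$. In the continuous, differentiable, analytic, Riemann-integrable, absolutely continuous, and bounded-variation classes treated in the preceding de~Bruijn-type theorem, the analogous averaging yields an everywhere Cauchy equation and no $\sigma$ is needed; in the purely measurable setting a genuine null exceptional set appears, and the content of the theorem is precisely that this defect is translation-invariant in an almost-everywhere sense. I would construct $a$ by selecting via Fubini a conull set $E\subset\mathbb{R}$ on which $g$ satisfies the exact Cauchy identity, then extending $g|_{E}$ to an additive function on all of $\mathbb{R}$ via a Hamel-basis argument; setting $\sigma:=g-a=f-\gamma-a$ and feeding the decomposition back into the cocycle identity forces the required almost-everywhere translation invariance of $\sigma$. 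Proving that $a$ can be chosen so that $\sigma$ remains measurable is where the real work lies, and this is the step that localizes the failure of the ordinary difference property for the class of measurable functions.
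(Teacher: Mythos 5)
A preliminary remark: the paper does not prove this statement at all --- it is quoted from Laczkovich's paper \cite{Lac80} purely as background for the concluding open problem --- so there is no internal proof to compare yours with, and your attempt must be judged on its own. It has a genuine gap at its first substantive step. The hypothesis gives measurability of the sections $x\mapsto \Delta_{h}f(x)$ for each fixed $h$; it gives nothing about the opposite sections $u\mapsto \Delta_{u}f(x)=f(x+u)-f(x)$, which are translates of $f$ minus a constant, and $f$ itself is not assumed measurable --- indeed the presence of the (possibly nonmeasurable) summand $\sigma$ in the conclusion is precisely the signal that it need not be. Separate measurability in one variable does not imply joint measurability, so no ``Fubini-type argument'' can upgrade $(x,h)\mapsto\Delta_{h}f(x)$ to a jointly measurable function, and your averaged function $\gamma(x)=\int_{0}^{1}\Delta_{u}f(x)\,du$ is not even well defined: for fixed $x$ the integrand is a nonmeasurable function of $u$. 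The fallback of ``redefining $f$ on a null set to be locally integrable'' is circular, since a function agreeing almost everywhere with a locally integrable one is already measurable, in which case the theorem is trivial with $a=\sigma=0$ and $\gamma=f$.

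The later steps inherit this defect and also misidentify the target. You say the real work is to choose $a$ so that ``$\sigma$ remains measurable,'' but $\sigma$ is \emph{not} required to be measurable --- the paper notes immediately after the theorem that under the Continuum Hypothesis $\sigma$ can be nonmeasurable, which is exactly how the failure of the difference property is exhibited. What must be arranged is that $\gamma$ is measurable and that $\sigma$ is invariant under almost all translations in the stated almost-everywhere sense. Laczkovich's actual argument never integrates $f$ itself: it operates only on the genuinely measurable objects $\Delta_{h}f$, using Steinhaus-- and Fubini--type arguments on those together with an analysis of $f$ along cosets of countable subgroups and a Hamel-basis construction to produce $a$ and $\sigma$, extracting the measurable part $\gamma$ at the end. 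Your intuition that an almost-everywhere defect must be absorbed into an almost-invariant $\sigma$ correctly explains the shape of the conclusion, but the proposal as written does not get past its opening move.
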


Assuming the Continuum Hypothesis, there exists a \emph{nonmeasurable} function $\sigma\colon \mathbb{R}\to \mathbb{R}$ with the above properties. 
This shows that the set of measurable functions does not have the difference property. 

Nevertheless, if we assume additionally that the mapping $\mathbf{D}f$ is also measurable then it can happen that the 
function $\sigma$ in the theorem of Laczkovich should be measurable. 
Thus we can formulate the following problem. 

\begin{opp}
Let $n\in \mathbb{N}$ be fixed and $f\colon \mathbb{R}\to \mathbb{R}$ be a function. 
Assume that for all $h\in \mathbb{R}$ the function $\Delta_{h}$ is measurable, additionally the 
mapping 
\[
 \mathbb{R}\ni \alpha \longmapsto \delta^{n+1}_{\alpha}f(1)
\]
is measurable, as well. 
Is it true that there exists a derivation $d\colon \mathbb{R} \to \mathbb{R}$ of order $n$ and a measurable 
function $\gamma\colon \mathbb{R}\to \mathbb{R}$ so that 
\[
 f(x)=d(x)+\gamma(x) 
\]
for all $x\in \mathbb{R}$. 
\end{opp}


\end{document}